\DeclareMathAlphabet{\mathpzc}{OT1}{pzc}{m}{it}
\newcommand{\green}{\color{green}}
\numberwithin{equation}{section}
\newcommand{\cT}{\mathcal{T}}
\newcommand{\cS}{\mathcal{S}}
\newcommand{\N}{\mathbb{N}}
\renewcommand{\P}{\mathcal{P}}
\newcommand{\al}{\alpha}
\DeclareMathOperator{\SL}{SL}
\newtheorem{Lemma}{Lemma}[section]
\newtheorem{Theorem}[Lemma]{Theorem}
\newtheorem{Corollary}[Lemma]{Corollary}
\theoremstyle{definition}
\newtheorem{Definition}[Lemma]{Definition}
\newtheorem{Remark}[Lemma]{Remark}
\newtheorem{Example}[Lemma]{Example}
\begin{document}

\setlength{\parindent}{0pt}
\setlength{\parskip}{2pt}

\title[Frieze pattern determinants]{Generalized frieze
pattern determinants and higher angulations of polygons}

\author{Christine Bessenrodt}
\address{Institut f\"{u}r Algebra, Zahlentheorie und Diskrete
  Mathematik, Fa\-kul\-t\"{a}t f\"{u}r Ma\-the\-ma\-tik und Physik, Leibniz
  Universit\"{a}t Hannover, Welfengarten 1, 30167 Hannover, Germany}
\email{bessen@math.uni-hannover.de}
\urladdr{http://www.iazd.uni-hannover.de/\~{ }bessen}

\author{Thorsten Holm$^1$}
\address{Institut f\"{u}r Algebra, Zahlentheorie und Diskrete
  Mathematik, Fa\-kul\-t\"{a}t f\"{u}r Ma\-the\-ma\-tik und Physik, Leibniz
  Universit\"{a}t Hannover, Welfengarten 1, 30167 Hannover, Germany}
\email{holm@math.uni-hannover.de}
\urladdr{http://www.iazd.uni-hannover.de/\~{ }tholm}

\author{Peter J\o rgensen}
\address{School of Mathematics and Statistics,
Newcastle University, Newcastle upon Tyne NE1 7RU, United Kingdom}
\email{peter.jorgensen@ncl.ac.uk}
\urladdr{http://www.staff.ncl.ac.uk/peter.jorgensen}

\thanks{{\em Acknowledgement. }This work has been carried out in the framework
  of the research priority programme SPP 1388 {\em Darstellungstheorie} of
  the Deutsche Forschungsgemeinschaft (DFG).  We gratefully acknowledge
  financial support through the grant HO 1880/5-1. }
\thanks{ $^1$ Corresponding author.
Email: {\tt holm@math.uni-hannover.de}, Phone: +49 511 762 4484,
Fax: +49 511 762 5490}
\keywords{Determinant, elementary divisor,
frieze pattern, polygon, Smith normal form, symmetric matrix}

\subjclass[2010]{05E99, 13F60, 15A15, 51M20}

\begin{abstract}
Frieze patterns (in the sense of Conway and Coxeter)
are in close connection to triangulations of polygons.
Broline, Crowe and Isaacs have assigned a symmetric
matrix to each polygon triangulation 
and computed the determinant.
In this paper we consider $d$-angulations of polygons
and generalize the combinatorial algorithm
for computing the entries in the 
associated symmetric matrices; we
compute their determinants and the Smith normal forms. 
It turns out that both are independent of the 
particular $d$-angulation,
the determinant is a power of $d-1$,
and the elementary divisors only take values $d-1$ and $1$.
We also show that in the generalized 
frieze patterns obtained
in our setting every adjacent
$2\times 2$-determinant is 0 or 1, and we give a combinatorial
criterion for when they are 1, which in the case $d=3$ gives back the
Conway-Coxeter condition on frieze patterns.
\end{abstract}

\maketitle

\section{Introduction}
\label{sec:introduction}

Frieze patterns have been introduced and studied by Conway and
Coxeter \cite{CC1}, \cite{CC2}.
A frieze pattern (of
size $n$) is an array of $n$ bi-infinite rows of positive integers
(arranged as in the example below)
such that the top and bottom rows consist only of 1's and,
most importantly, every set of four adjacent numbers forming a diamond
$$\begin{matrix}
&b& \\ a&&d\\&c&
\end{matrix}$$
satisfies the determinant condition $ad-bc=1$.
An example of such a frieze pattern is given by
$$
\begin{array}{cccccccccccccccccccccccc}
\ldots & 1 & & 1 & & 1 & & 1 & & {\green 1} & & 1 & & 1 & & 1 & & 1 & & 1 & & 1 & \ldots \\
& \ldots & 1 & & 3 & & 1 & & {\green 2} & & {\green 2} & & 1 & & 3 & & 1 & & 2 & & 2 &
\ldots & & \\
\ldots & 1 & & 2 & & 2 & & {\green 1} & & {\green 3} & & {\green 1} & & 2 & & 2 & & 1 & & 3 & & 1 & \ldots \\
& \ldots & 1 & & 1 & & {\green 1} & & {\green 1} & & {\green 1} & & {\green 1} & & 1 & & 1 & & 1 & & 1 &
\ldots & & \\
\end{array}
$$
A crucial feature of frieze patterns is that they are
invariant under a glide reflection. In the above example,
a fundamental domain for the frieze pattern is given by the green
region; the entire pattern is obtained by iteratively performing a glide
reflection to this region.

Frieze patterns can be constructed geometrically via triangulations
of polygons.
For $n\in \mathbb{N}$, let $\mathcal{P}_n$ be a convex $n$-gon,
and consider any triangulation $\mathcal{T}$ of $\mathcal{P}_n$
(necessarily into $n-2$ triangles).
We label the vertices of $\mathcal{P}_n$ by $1,\ldots,n$ in
counterclockwise order; in the sequel $\mathcal{P}_n$ is
always meant to be the convex $n$-gon together with a fixed 
labelling.

For each vertex $i\in \{1,\ldots,n\}$
let $a_i$ be the number of triangles of $\mathcal{T}$
incident to the vertex~$i$. Then the sequence $a_1,\ldots,a_n$,
repeated infinitely often,
gives the second row in a frieze pattern (of size $n-1$).

As an example, consider the case $n=5$ and the following triangulation
of the pentagon
\[
  \begin{tikzpicture}[auto]
    \node[name=s, shape=regular polygon, regular polygon sides=5, minimum size=3cm, draw] {};
    \draw[thick] (s.corner 2) to (s.corner 4);
    \draw[thick] (s.corner 2) to (s.corner 5);
    \draw[shift=(s.corner 1)]  node[above]  {{\small 1}};
  \draw[shift=(s.corner 2)]  node[left]  {{\small 2}};
  \draw[shift=(s.corner 3)]  node[below]  {{\small 3}};
  \draw[shift=(s.corner 4)]  node[below]  {{\small 4}};
  \draw[shift=(s.corner 5)]  node[right]  {{\small 5}};
  \end{tikzpicture}
\]
We get for the number of triangles at the vertices the sequence
$a_1=1$, $a_2=3$, $a_3=1$, $a_4=2$ and $a_5=2$, whose repetition gives
exactly the second row in the above example of a frieze pattern.

A crucial result of Conway and Coxeter is that every frieze
pattern arises in this way from a triangulation.

As a more complicated example, consider the following
triangulation of the octagon
\[
  \begin{tikzpicture}[auto]
    \node[name=s, draw, shape=regular polygon, regular polygon sides=8, minimum size=3cm] {};
    \draw[thick] (s.corner 1) to (s.corner 4);
    \draw[thick] (s.corner 2) to (s.corner 4);
    \draw[thick] (s.corner 4) to (s.corner 6);
    \draw[thick] (s.corner 1) to (s.corner 6);
    \draw[thick] (s.corner 1) to (s.corner 7);
   \draw[shift=(s.corner 1)]  node[above]  {{\small 1}};
  \draw[shift=(s.corner 2)]  node[above]  {{\small 2}};
  \draw[shift=(s.corner 3)]  node[left]  {{\small 3}};
  \draw[shift=(s.corner 4)]  node[left]  {{\small 4}};
  \draw[shift=(s.corner 5)]  node[below]  {{\small 5}};
  \draw[shift=(s.corner 6)]  node[below]  {{\small 6}};
  \draw[shift=(s.corner 7)]  node[right]  {{\small 7}};
  \draw[shift=(s.corner 8)]  node[right]  {{\small 8}};
   \end{tikzpicture}
\]
This triangulation leads to the following frieze pattern
$$
\begin{array}{ccccccccccccccccccccccccccccccccccc}
& 1 & & 1 & & 1 & & {\green 1} & &  {\green 1} & & {\green 1} & & {\green 1}
 & & {\green 1} & & {\green 1} & & {\green 1} & & 1 & & 1 & & 1 & &
 1 & & 1 & & 1
& & 1 & \\
\cdots 2 & & 2 & & 1 & & 4 & & {\green 2} & & {\green 1} & & {\green 4} & &
{\green 1} & & {\green 3} & & {\green 2} & & 1 & & 4 & & 2 & & 1 & & 4 & & 1
& & 3 & & 2 \cdots \\
& 5 & & 1 & & 3 & & 7 & & {\green 1} & & {\green 3} & & {\green 3} & &
{\green 2}
& & {\green 5} & & 1 & & 3 & & 7 & & 1 & & 3 & & 3 & & 2 & & 5 & \\
\cdots 3 & & 2 & & 2 & & 5 & & 3 & & {\green 2} & & {\green 2} & & {\green 5}
& & {\green 3} & & 2 & & 2 & & 5 & & 3 & & 2 & & 2 & & 5 & & 3 & & 2 \cdots \\
& 1 & & 3 & & 3 & & 2 & & 5 & & {\green 1} & & {\green 3} & & {\green 7} &
& 1 & & 3 & & 3 & & 2 & & 5 & & 1 & & 3 & & 7 & & 1 & \\
\cdots 2 & & 1 & & 4 & & 1 & & 3 & & 2 & & {\green 1} & & {\green 4} & & 2 & & 1 & & 4 & & 1
& & 3 & & 2 & & 1 & & 4 & & 2 & & 1\cdots \\
& 1 & & 1 & & 1 & & 1 & & 1 & & 1 & & {\green 1} & & 1 & & 1 & & 1 & & 1 & & 1 & & 1 & & 1 &
& 1 & & 1 & & 1 &  \\
\end{array}
$$

To any triangulation $\cT$ of a convex $n$-gon
$\mathcal{P}_n$, Broline, Crowe and Isaacs \cite{BCI} have
attached a symmetric $n\times n$-matrix associated to the fundamental regions
of the corresponding frieze pattern.
By fixing a labelling of the vertices we choose a particular
fundamental region and thus fix a particular matrix $M_\cT$.
For example, take the green region in the above example;
make it the lower 
triangular part of a symmetric $n\times n$-matrix $M_\cT$ with
0's on the diagonal, and we get the following $8\times 8$-matrix
$$M_\cT=
\begin{pmatrix}
0 & 1 & 2 & 1 & 2 & 1 & 1 & 1 \\
1 & 0 & 1 & 1 & 3 & 2 & 3 & 4 \\
2 & 1 & 0 & 1 & 4 & 3 & 5 & 7 \\
1 & 1 & 1 & 0 & 1 & 1 & 2 & 3 \\
2 & 3 & 4 & 1 & 0 & 1 & 3 & 5 \\
1 & 2 & 3 & 1 & 1 & 0 & 1 & 2 \\
1 & 3 & 5 & 2 & 3 & 1 & 0 & 1 \\
1 & 4 & 7 & 3 & 5 & 2 & 1 & 0 \\
\end{pmatrix}.
$$
The main result of Broline, Crowe and Isaacs gives a simple closed
formula for the determinant of this matrix.
This does not depend on the labelling, i.e., the choice of a fundamental region,
but much more surprisingly, this determinant is independent of the particular
triangulation.

\begin{Theorem}[\cite{BCI}, Theorem 4] \label{thm:BCI}
Let $\cT$ be a triangulation of $\mathcal{P}_n$.
With the above notation we have
$$\det M_\cT = - (-2)^{n-2}\:.$$
\end{Theorem}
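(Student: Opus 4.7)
The plan is to proceed by induction on $n$. For the base case $n=3$ the polygon is itself a triangle: $M_\cT$ is the $3\times 3$ symmetric matrix with zero diagonal and all off-diagonal entries equal to $1$, and its determinant is $2=-(-2)^1$, as claimed.

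For the inductive step ($n\ge 4$) I would use the well-known fact that every triangulation of $\mathcal{P}_n$ has at least two \emph{ears}, i.e.\ vertices $i$ with $a_i=1$; such an $i$ is incident to a unique triangle $\{i-1,i,i+1\}$ whose other two edges lie on the boundary of $\mathcal{P}_n$. Fix an ear $i$ and let $\cT'$ be the triangulation of $\mathcal{P}_{n-1}$ obtained from $\cT$ by removing the vertex $i$ together with the ear triangle. The goal is the reduction
\[
\det M_\cT \;=\; -2\,\det M_{\cT'},
\]
which combined with the inductive hypothesis $\det M_{\cT'}=-(-2)^{n-3}$ yields $\det M_\cT=-(-2)^{n-2}$.

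The heart of the argument is the row identity
\[
m_{ij} \;=\; m_{i-1,j}+m_{i+1,j} \qquad\text{for every } j\ne i,
\]
which I would derive from the combinatorial interpretation of $m_{jk}$ as a count of sequences of distinct triangles of $\cT$, one chosen to contain each intermediate vertex between $j$ and $k$. Since the only triangle at $i$ is the ear, the choice at the index $s=i$ is forced, and grouping the remaining choices according to whether they would otherwise use the ear triangle at index $i-1$ or $i+1$ produces the identity. Combined with $m_{i,i\pm 1}=1$, the row operation $R_i\mapsto R_i-R_{i-1}-R_{i+1}$ then annihilates row $i$ outside the diagonal entry, which becomes $0-1-1=-2$; cofactor expansion gives $\det M_\cT=-2\,\det M'$, where $M'$ is the $(n-1)\times(n-1)$ submatrix of $M_\cT$ obtained by striking row and column $i$.

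The remaining task is to identify $M'$ with $M_{\cT'}$, i.e.\ to verify that for all $j,k\ne i$ the entry $m_{jk}$ is the same whether computed from $\cT$ or from $\cT'$. This again hinges on the combinatorial description: the triangulations differ only in the ear triangle, and in any matching between $j,k\ne i$ the ear triangle is either forced at the index $i$ (when $j$ and $k$ lie on opposite sides of $i$) or excluded by the distinctness constraint at the neighbouring indices, so matchings in $\cT$ restrict bijectively to matchings in $\cT'$. I expect this bookkeeping --- particularly making the bijection explicit when $j$ and $k$ lie on opposite sides of $i$ --- to be the principal obstacle; once it is established, the induction closes.
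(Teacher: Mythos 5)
Your proposal is correct and follows essentially the same route as the paper, which establishes this statement as the $d=3$ case of Theorem~\ref{thm:smith}: the recursion $\det M_\cT=-2\det M_{\cT'}$ obtained by cutting off an ear is exactly the $d=3$ instance of the block reduction via Corollary~\ref{cor:recursion}, and your row identity $m_{i,j}=m_{i-1,j}+m_{i+1,j}$ together with the identification of the complementary minor with $M_{\cT'}$ are precisely the bijections of Cases 1 and 2 in the proof of Theorem~\ref{thm:seq_matrix}. The bookkeeping you flag does go through as you expect, so the induction closes.
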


Hence in the example above the matrix has determinant
$\det M_\cT = -2^6 = -64$ (which can also be verified directly by an
easy calculation).

In this paper we consider more generally $d$-angulations of 
polygons for arbitrary integers $d\ge 3$. These are also
classic objects in combinatorics which can be traced back 
(at least) to a paper by Cayley \cite{Cayley}. 
More recently, $d$-angulations have for instance appeared prominently 
in the theory of generalized cluster complexes as introduced
by Fomin and Reading \cite{FR}; see also \cite[Section 5.2.3]{Armstrong} 
and the many references therein. 

The main aim of this paper is to generalize the Broline-Crowe-Isaacs
matrices and the results in \cite{BCI}
from triangulations to $d$-angulations for arbitrary
integers $d\ge 3$, and to determine also the Smith normal form
of the corresponding matrices $M_\cT$.

Recall the above simple algorithm for obtaining a matrix
from a triangulation: one just counts the number of triangles
attached to any vertex of the polygon; these numbers give the
second row of a frieze pattern and from the latter one reads off the
matrix $M_{\cT}$. It turns out that this procedure has to be refined
in order to allow a generalization to higher angulations.
The problem is that it no longer suffices to get one line
of a sort of frieze, instead, for a given $d$-angulation $\cT$
of $\mathcal{P}_n$ one has to describe all numbers
in the new matrix $M_\cT$
combinatorially in terms of the $d$-angulation.
This will be achieved using the notion of $d$-paths;
see Definition~\ref{def:sequence}. The resulting matrices
$M_{\cT}$ which generalize the Broline-Crowe-Isaacs matrices
are then shown in Theorem~\ref{thm:seq_matrix}
to be symmetric matrices. For the numbers of $d$-paths
we give in Section~\ref{sec:weights} an alternative description
which simplifies their computation.

As one of the main results in this paper we show in Section
\ref{sec:det}
the following
generalization and refinement
of the above determinant theorem by Broline, Crowe and Isaacs
where we move from triangulations
to arbitrary $d$-angulations and we
also give the elementary divisors (which have not
been determined in \cite{BCI}).

\begin{Theorem}\label{thm:smith}
Let $d\ge 3$ and $m\ge 0$ be integers and set
$n=d+m(d-2)$. Let the matrix $M_\cT$ be associated to a $d$-angulation $\cT$
of $\mathcal{P}_n$.
Then the elementary
divisors of $M_\cT$ are $d-1$ with multiplicity $m+1$ and
the remaining elementary divisors are all~$1$.
The determinant of the matrix is
$$\det M_\cT = (-1)^{n-1} (d-1)^{m+1}.$$
\end{Theorem}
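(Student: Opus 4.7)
The natural strategy is to induct on $m$, the number of $d$-gons in the angulation minus one. For the base case $m = 0$, the $d$-angulation of $\mathcal{P}_d$ consists of the single $d$-gon itself, so any two distinct vertices should be joined by exactly one $d$-path and $M_\cT = J_d - I_d$. Subtracting row $1$ from each other row, then adding the resulting rows back into row $1$, readily exhibits the Smith normal form $\mathrm{diag}(1,\ldots,1,d-1)$ and determinant $(-1)^{d-1}(d-1) = (-1)^{n-1}(d-1)^{m+1}$, settling the base case.

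For the inductive step, I would use the fact that every $d$-angulation with $m+1 \geq 2$ pieces has an \emph{ear}: a $d$-gon $D$ in $\cT$ all but one of whose edges lies on the boundary of $\mathcal{P}_n$. After a cyclic relabelling one may assume $D$ has vertices $1,2,\ldots,d$ with diagonal from $1$ to $d$. Deleting the interior ear vertices $2,\ldots,d-1$ gives a $d$-angulation $\cT'$ of a polygon on $n' = n-(d-2)$ vertices with $m$ pieces, to which the induction hypothesis applies. The plan is to use integer row and column operations to reduce $M_\cT$ to the block form
$$
M_\cT \;\sim\; \begin{pmatrix} M_{\cT'} & 0 \\ 0 & E \end{pmatrix}
$$
where $E$ is a $(d-2)\times(d-2)$ ``ear block'' indexed by $\{2,\ldots,d-1\}$. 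Since Smith normal form is multiplicative under block direct sums and unimodular operations, combining the induction hypothesis on the first block with an explicit computation for $E$ will finish the proof.

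The reduction to block form rests on a key combinatorial identity: for each interior ear vertex $k \in \{2,\ldots,d-1\}$ and each outside vertex $j \notin \{2,\ldots,d-1\}$, every $d$-path from $k$ to $j$ must begin by traversing the ear $D$, so the count $M_\cT(k,j)$ decomposes as an explicit $\mathbb{Z}$-linear combination of $M_\cT(1,j)$ and $M_\cT(d,j)$ dictated by the definition in Section~\ref{sec:weights}. These identities permit one to zero out the off-diagonal blocks of $M_\cT$ linking $\{2,\ldots,d-1\}$ to the rest, leaving $E$ untouched up to predictable adjustments. An analogous computation inside the ear shows that $E$ itself, essentially a $(d-2)\times(d-2)$ matrix of $d$-path counts confined to $D$, has Smith normal form $\mathrm{diag}(1,\ldots,1,d-1)$ and determinant $(-1)^{d-2}(d-1)$, so that the induction produces one additional elementary divisor equal to $d-1$ and introduces the sign factor $(-1)^{d-2}$, matching $(-1)^{n-1}/(-1)^{n'-1} = (-1)^{d-2}$.

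The main obstacle is the bookkeeping needed to prove the row/column relations for interior ear vertices: the reductions must be valid integer operations (hence preserve Smith normal form) and must produce zeros rather than mere identities modulo $d-1$. Once these relations are established, both the determinant formula and the Smith normal form follow by combining the induction hypothesis on $M_{\cT'}$ with the explicit analysis of $E$; the sign and the exponent of $d-1$ then work out precisely because each ear removal decreases $n$ by $d-2$ and $m$ by $1$ simultaneously.
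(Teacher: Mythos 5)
Your proposal is correct and follows essentially the same route as the paper: induction by cutting off a boundary $d$-gon (ear), using the fact that each interior ear vertex's row/column equals the sum of the rows/columns of the two endpoints of the cutting diagonal (the paper's Corollary~\ref{cor:recursion}) to block-diagonalize $M_\cT$ into $M_{\cT'}$ plus a $(d-2)\times(d-2)$ ear block with determinant $(-1)^{d-2}(d-1)$ and Smith normal form $\Delta(1^{d-3},d-1)$. The only detail left implicit in your sketch, which the paper makes explicit, is that the $\mathbb{Z}$-linear combination has coefficients $1$ and $1$, so the ear block after reduction is $-2E_{d-2}-M_{d-2}$.
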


Note that in the case $d=3$ of triangulations of an $n=(m+3)$-gon
we get back the formula
$$\det M_\cT = (-1)^{n-1} 2^{m+1} = (-1)^{n-1} 2^{n-2}
= -(-2)^{n-2}$$
from Theorem~\ref{thm:BCI}. But furthermore,
we also get the refined result
on elementary divisors -- even these are independent of the
particular $d$-angulation of $\mathcal{P}_n$.
\smallskip

In the final Section~\ref{sec:friezes} we study the frieze-like
patterns we obtain from $d$-angulations. As for the 
Conway-Coxeter frieze patterns we use the numbers of $d$-paths
occurring in the lower triangular part of the symmetric
matrices $M_{\cT}$ as a fundamental region from which the entire
pattern is created using glide reflections. We
determine in Theorem~\ref{thm:hinge} the possible values of the
determinant of any adjacent $2\times 2$-matrix in these patterns.
It turns out that they only take values $0$ and $1$ and we give
an explicit combinatorial criterion for which of these determinants
are 0 and which are 1. For the case $d=3$ of triangulations
it is then easy to see that the case of determinant 0 does not occur,
thus reproving the Conway-Coxeter condition that all
determinants should be 1.
\smallskip

In a recent paper, Baur and Marsh \cite{BM} have generalized the
Broline-Crowe-Isaacs result on the determinant in a different way,
namely to frieze patterns arising from Fomin and Zelevinsky's
cluster algebras of Dynkin type
$A$ and they have given a representation-theoretic interpretation
inside the root category of type $A$.
\smallskip

The combinatorial methods by Broline, Crowe and Isaacs for computing
the entries of a frieze pattern from a triangulation have recently
found applications in the context of $\SL_2$-tilings. The latter
have been introduced by Assem, Reutenauer and Smith~\cite{ARS}
in connection with Fomin and Zelevinsky's cluster algebras
as a tool for obtaining closed formulae for cluster variables.
An $\SL_2$-tiling assigns a positive integer to each vertex in the
$\mathbb{Z}\times \mathbb{Z}$-grid in the plane such that every
adjacent $2\times 2$-matrix has determinant~1. Note the similarity
to the condition for frieze patterns but the limiting
top and bottom rows of 1's disappear and one now has a pattern on
the entire plane.
Applying the Broline-Crowe-Isaacs method to triangulations of some
infinite combinatorial object, many new $\SL_2$-tilings have been
found in \cite{HJ} and moreover a geometrical
interpretation for them has been provided
(which is new even for the previously known $\SL_2$-tilings). 
The combinatorial results on $d$-angulations in the present paper have 
recently also lead to new algebraic insights in the context 
of cluster categories.  
A key ingredient in the categorification of cluster algebras via
cluster categories is the Caldero-Chapoton
map \cite{CC} which maps indecomposable objects in the cluster 
category to cluster variables in the corresponding cluster algebra. 
A nice feature of this map is that it gives rise to friezes (in the 
sense of \cite{ARS}) and this in particular covers the Conway-Coxeter
friezes, see \cite[Section 5]{CC}. 
The frieze patterns from $d$-angulations considered in this paper 
are not covered by the classic Caldero-Chapoton map.  
In \cite{HJ-CC} we present a modification of the Caldero-Chapoton map
which now only depends on a rigid object (instead of a cluster
tilting object). This modified map then produces
socalled generalised friezes (including the ones 
of the present paper) and we give an algebraic
explanation for which of the neighbouring $2\times 2$-determinants 
have value 0 or 1, respectively.

\section{Sequences of polygons in higher angulations}

In this section we generalize the basic construction
for obtaining a frieze from a triangulation.
Instead of triangulations we consider $d$-angulations of
convex polygons for an arbitrary integer $d\ge 3$.

It is not hard to show inductively that a convex
$n$-gon $\mathcal{P}_n$
can be divided into $d$-gons if and only if $n$ is of the form
$n=d + m(d-2)$ for some $m\in \mathbb{N}_0$; in this case there are
precisely $m+1$ of the $d$-gons in the $d$-angulation (or, alternatively,
precisely $m$ diagonals).

Recall that the vertices of $\mathcal{P}_n$
are labelled $1,2,\ldots,n$ in counterclockwise order, and
this numbering has to be taken modulo~$n$ below.

\begin{Definition} \label{def:sequence}
Let $n=d+m(d-2)$ for some $d\ge 3$ and $m\in \mathbb{N}_0$.
Let $\cT$ be a $d$-angulation of
$\mathcal{P}_n$, and let $i$ and $j$ be vertices of
$\mathcal{P}_n$.
\begin{itemize}
\item[{(a)}] A sequence $p_{i+1},p_{i+2},\ldots,
p_{j-2},p_{j-1}$ of $d$-gons of $\cT$
is called a (counterclockwise) $d$-path from $i$ to~$j$ if it
satisfies the following properties:
\begin{itemize}
\item[{(i)}] The $d$-gon $p_k$ is incident to vertex~$k$,
for all $k\in \{i+1, i+2,\ldots,j-1\}$.
\item[{(ii)}] In the sequence $p_{i+1},p_{i+2},
\ldots, p_{j-2},p_{j-1}$, every $d$-gon of $\cT$
appears at most $d-2$ times.
\end{itemize}
\item[{(b)}] The number of (counterclockwise)
$d$-paths from $i$ to $j$
is denoted $m_{i,j}$.
\end{itemize}
We then define the $n\times n$-matrix
$M_\cT$ associated to the $d$-angulation $\cT$
by $M_\cT=(m_{i,j})_{1\leq i,j\leq n}$.
\end{Definition}

Roughly speaking, for a $d$-path we go counterclockwise from $i$ to $j$ and at each
intermediate vertex we pick an attached $d$-gon, so that in total
every $d$-gon appears at most $d-2$ times in the sequence.

\begin{Remark} \label{rem:d-path}
When $n=d+m(d-2)$, the polygon  $\mathcal{P}_n$ is dissected into $m+1$ $d$-gons
and any $d$-path is a sequence of length at most $n-2=(m+1)(d-2)$,
i.e., a $d$-path can never go around $\mathcal{P}_n$
full circle (or more).
In particular, for the numbers $m_{i,j}$ associated
to a $d$-angulation of $\mathcal{P}_n$
we have the properties:
$m_{i,i}=0$ and $m_{i,i+1}=1$,
where the only $d$-path from $i$ to $i+1$ is the empty sequence. 
On the other hand, any $d$-path from $i+1$ to $i$ is of length $n-2$, hence it has to contain every $d$-gon of $\mathcal{T}$ exactly $d-2$ times. 
\end{Remark}

Note that the definition above
provides a direct generalization of the construction
in \cite[p.173]{BCI} for triangulations (where each
triangle was allowed to appear at most once).

Before studying the above sequences and numbers in more detail
let us illustrate Definition~\ref{def:sequence} with an example.

\begin{Example} 
Let $d=4$ and $m=3$, thus $n=d+m(d-2)=10$, and we consider the
following quadrangulation $\cT$ of the $10$-gon.
\[
  \begin{tikzpicture}[auto]
    \node[name=s, shape=regular polygon, regular polygon sides=10, minimum size=3cm, draw] {};
 \draw[thick] (s.corner 1) to node[near start, left=7pt] {$\gamma$} (s.corner 4);
 \draw[thick] (s.corner 4) to node[near end, above=25pt] {$\delta$}
 node[below=2pt] {$\beta$} (s.corner 7);
 \draw[thick] (s.corner 7) to node[very near end, below=18pt] {$\alpha$}
 (s.corner 10);
  \draw[shift=(s.corner 1)]  node[above]  {{\small 2}};
  \draw[shift=(s.corner 2)]  node[above]  {{\small 3}};
  \draw[shift=(s.corner 3)]  node[left]  {{\small 4}};
  \draw[shift=(s.corner 4)]  node[left]  {{\small 5}};
  \draw[shift=(s.corner 5)]  node[left]  {{\small 6}};
  \draw[shift=(s.corner 6)]  node[below]  {{\small 7}};
  \draw[shift=(s.corner 7)]  node[below]  {{\small 8}};
  \draw[shift=(s.corner 8)]  node[right]  {{\small 9}};
  \draw[shift=(s.corner 9)]  node[right]  {{\small 10}};
  \draw[shift=(s.corner 10)]  node[right]  {{\small 1}};
  \end{tikzpicture}
\]
We shall compute some of the numbers $m_{i,j}$ and list the
corresponding sequences of quadrangles explicitly. By Definition~\ref{def:sequence}, no quadrangle is allowed to appear more than
twice in a 4-path.

Let $i=2$ and $j=6$. Note that at vertices $3$ and $4$ one has to
choose $\gamma$, and one can choose $\delta$ or $\beta$ at vertex 5.
So the only 4-paths from vertex~2 to vertex~6 are
$\gamma,\gamma,\delta$ and
$\gamma,\gamma,\beta$, and hence $m_{2,6}=2$.

Consider now $i=4$ and $j=9$. Note that at both vertices $6$ and $7$
we have to choose $\beta$. With this already twofold appearance,
$\beta$ must not be chosen at any other vertex. This leaves
$\gamma$ or $\delta$ for vertex 5, and $\alpha$ or $\delta$ for
vertex 8.  So we get four 4-paths from vertex~4 to vertex~9:
$\gamma,\beta,\beta,\alpha$;
$\gamma,\beta,\beta,\delta$;
$\delta,\beta,\beta,\alpha$
and
$\delta,\beta,\beta,\delta$,
and hence $m_{4,9}=4$.

The entire matrix $M_\cT$ can be computed to have the following 
form,
$$M_\cT = 
\begin{pmatrix}
0 & 1 & 2 & 2 & 1 & 2 & 2 & 1 & 1 & 1 \\
1 & 0 & 1 & 1 & 1 & 2 & 2 & 1 & 2 & 2  \\
2 & 1 & 0 & 1 & 1 & 3 & 3 & 2 & 4 & 4  \\
2 & 1 & 1 & 0 & 1 & 3 & 3 & 2 & 4 & 4  \\
1 & 1 & 1 & 1 & 0 & 1 & 1 & 1 & 2 & 2  \\
2 & 2 & 3 & 3 & 1 & 0 & 1 & 1 & 3 & 3  \\
2 & 2 & 3 & 3 & 1 & 1 & 0 & 1 & 3 & 3  \\
1 & 1 & 2 & 2 & 1 & 1 & 1 & 0 & 1 & 1  \\
1 & 2 & 4 & 4 & 2 & 3 & 3 & 1 & 0 & 1  \\
1 & 2 & 4 & 4 & 2 & 3 & 3 & 1 & 1 & 0  \\
\end{pmatrix}.
$$
\end{Example}

We shall use the following notion frequently below. A $d$-gon $\alpha$
of a $d$-angulation $\mathcal{T}$ of a convex polygon $\mathcal{P}$
is called a {\em boundary $d$-gon} if at most
one of the boundary edges
of $\alpha$ is a diagonal in the interior of $\mathcal{P}$ (and the other
are on the boundary of $\mathcal{P}$). 
Note that the case with no
boundary edge of $\alpha$ in the interior of $\mathcal{P}$ only occurs 
when $\alpha=\mathcal{P}$ itself is a $d$-gon. When precisely one
boundary edge of $\alpha$ is in the interior of $\mathcal{P}$ then,
loosely speaking, $\alpha$ is a
$d$-gon of $\mathcal{T}$ which is cut off by one diagonal of $\mathcal{P}$.
It is easy to see by induction that such a boundary $d$-gon exists
for every $d$-angulation and that every $d$-angulation with at least
one diagonal contains at least two boundary $d$-gons.

\begin{Theorem} \label{thm:seq_matrix}
Let $n=d+m(d-2)$, with $d\ge 3$ and
$m\in \N_0$. Let $\cT$ be a $d$-angulation of
$\mathcal{P}_n$, and let $M_\cT=(m_{i,j})$ be its associated
matrix of numbers of $d$-paths. Then
$m_{i,j}=m_{j,i}$ for all $1\leq i,j\leq n$, i.e.,
$M_\cT$ is a symmetric matrix.
\end{Theorem}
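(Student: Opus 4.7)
The plan is to prove symmetry by induction on $m$ (equivalently, on the number $m+1$ of $d$-gons in $\cT$). The base case $m=0$ is immediate: $\mathcal{P}_n=\mathcal{P}_d$ is itself the only $d$-gon, every intermediate vertex has a unique incident $d$-gon, and the length constraint is automatically met, so $m_{i,j}=1=m_{j,i}$ for all $i\neq j$.

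For the inductive step I would pick a boundary $d$-gon $\alpha$ of $\cT$ (one exists by the discussion preceding the theorem). After a cyclic relabelling I may assume $\alpha$ has vertices $0,1,\ldots,d-1$ with the diagonal between $0$ and $d-1$; the \emph{interior} vertices $1,2,\ldots,d-2$ of $\alpha$ are then incident to no other $d$-gon of $\cT$. Let $\mathcal{P}'$ be the $(n-d+2)$-gon obtained from $\mathcal{P}_n$ by deleting these interior vertices (so $0$ and $d-1$ become adjacent in $\mathcal{P}'$), and let $\cT'=\cT\setminus\{\alpha\}$, a $d$-angulation of $\mathcal{P}'$ with $m$ $d$-gons. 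By the induction hypothesis, $M_{\cT'}$ is symmetric, and the task reduces to expressing each entry $m_{i,j}$ of $M_\cT$ in terms of entries of $M_{\cT'}$ by means of a symmetric formula.

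The case distinction is by where $i$ and $j$ lie. If both $i,j\in\mathcal{P}'$, then the counterclockwise arc from $i$ to $j$ either entirely avoids the interior of $\alpha$ or contains every interior vertex of $\alpha$; in the first subcase $\alpha$ is never an available choice, while in the second $\alpha$ is forced at each interior vertex, saturating its allowed multiplicity $d-2$ and therefore forbidden at $0$ and $d-1$ even if those are intermediate. Both subcases produce a bijection between $d$-paths in $\cT$ and $d$-paths in $\cT'$, so $m_{i,j}^{\cT}=m_{i,j}^{\cT'}$ and symmetry follows by induction. If $i\in\mathcal{P}'$ and $j\in\{1,\ldots,d-2\}$, I would split a $d$-path from $i$ to $j$ according to whether $\alpha$ is chosen at vertex $0$ (the only non-forced place where $\alpha$ can still appear): this yields the identity $m_{i,j}^{\cT}=m_{i,0}^{\cT'}+m_{i,d-1}^{\cT'}$, and the analogous identity for $m_{j,i}^{\cT}$ has the same right-hand side after applying the inductive symmetry of $M_{\cT'}$. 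If both $i,j\in\{1,\ldots,d-2\}$, a short length comparison against the maximum $d$-path length $n-d$ in $\cT'$ (supplied by the Remark) forces $\alpha$ to be picked at \emph{both} $0$ and $d-1$ whenever the arc traverses $\mathcal{P}'$ at all, reducing the count to $m_{d-1,0}^{\cT'}=m_{0,d-1}^{\cT'}=1$; in the complementary situation the arc stays strictly inside $\alpha$ and the only $d$-path is the constant $\alpha$-sequence, again of count $1$.

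The main obstacle is the bookkeeping: in each case one has to track the forced occurrences of $\alpha$ at interior vertices, the optional occurrences at the two diagonal endpoints $0$ and $d-1$, and the corresponding $d$-path in $\cT'$ on the remaining $\mathcal{P}'$-vertices. The length bound from the Remark is precisely what pins down when $\alpha$ must be chosen at the diagonal endpoints, and once this accounting is in place the symmetry of $M_\cT$ drops out of the inductive symmetry of $M_{\cT'}$.
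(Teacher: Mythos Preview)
Your proposal is correct and follows essentially the same approach as the paper: induction on $m$, removal of a boundary $d$-gon $\alpha$, and the same three-case analysis (both vertices in the smaller polygon, one vertex interior to $\alpha$, both interior to $\alpha$), with the same bijections and the same use of the length bound from the Remark to force $\alpha$ at the diagonal endpoints in the third case. The only differences are cosmetic---your labelling of $\alpha$'s vertices as $0,1,\ldots,d-1$ versus the paper's $1,n,n+1,\ldots,n+d-2$---and the level of detail; the paper writes out the bijections explicitly where you describe them in words.
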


\begin{proof}
We prove the result by induction on~$m$.
For $m=0$, we have just a $d$-gon $\al=\P_d$ (with no diagonals),
and we clearly have $m_{i,j}=1$ for all $i\neq j$,
the only $d$-path from $i$ to $j$ being
$p_{i+1}=\al,\al,\ldots,p_{j-1}=\al$.

Now assume that we have already proved the claim for $d$-angulations of
$\mathcal{P}_n$.
We consider a $d$-angulation~$\cT$ of $\P_{n+d-2}$ and want to prove
symmetry for the matrix $M_\cT=(m_{i,j})_{1\le i,j\le n+d-2}$.
Let $\al$ be a boundary $d$-gon of $\mathcal{T}$ cut off by the
diagonal~$t$;
w.l.o.g.\ we may assume that this is a diagonal between the vertices 
$1$ and $n$,
so that $\al$ has vertices $1,n,n+1,\ldots, n+d-2$, as in the
following figure. 
\[
  \begin{tikzpicture}[auto]
    \node[name=s, shape=regular polygon, regular polygon sides=20, minimum size=3cm, draw] {};
 \draw[thick] (s.corner 1) to (s.corner 14);
 \draw[thick] (s.corner 1) to (s.corner 14);
  \draw[shift=(s.corner 17)]  node[left=5pt]  {$\alpha$};
  \draw[shift=(s.corner 5)]  node[right=40pt]  {$t$};
  \draw[shift=(s.corner 7)]  node[right=20pt]  {$\mathcal{P}_n$};
  \draw[shift=(s.corner 1)]  node[above]  {{\small 1}};
  \draw[shift=(s.corner 2)]  node[above]  {{\small 2}};
  \draw[shift=(s.corner 14)]  node[right]  {{\small $n$}};
  \draw[shift=(s.corner 15)]  node[right]  {{\small $n+1$}};
  \draw[shift=(s.corner 20)]  node[right]  {{\small $n+d-2$}};
  \draw[shift=(s.corner 13)]  node[below]  {{\small $n-1$}};
  \end{tikzpicture}
\]
We denote by $\P_n$ the $n$-gon with vertices $1,2,\ldots,n$
obtained from $\P_{n+d-2}$ by cutting off~$\al$;
let $\cT'$ be the $d$-angulation of $\P_n$ obtained from $\cT$
by restriction, and let $M'=M_{\cT'}=(m_{i,j}')_{1\le i,j\le n}$ be the corresponding
matrix of $d$-path numbers.
We now compare the path numbers. We already know
from Remark~\ref{rem:d-path} 
that $m_{i,i}=0$ and
$m_{i,i}'=0$ for all vertices $i$ in the respective $d$-angulated polygons.

{\em Case~1:}
Let $i,j \in \{1,\ldots,n\}$ with $i\le j$.
We clearly have $m_{i,j}=m_{i,j}'$, as the $d$-paths counted are the same
in this case.
On the other hand, also $m_{j,i}=m_{j,i}'$, since there is a bijection
mapping $d$-paths from $j$ to $i$ in $\P_n$ to $d$-paths from $j$ to $i$
in $\P_{n+d-2}$ given by inserting the $d$-gon $\al$ with multiplicity
$d-2$:
$$p_{j+1},\ldots,p_n,p_1,\ldots,p_{i-1}
\mapsto  p_{j+1},\ldots,p_n,p_{n+1}=\al, \al,\ldots, \al, p_{n+d-2}=\al,  p_1,\ldots,p_{i-1}\:. $$
Note here that the vertices $n+1,\ldots,n+d-2$ are only incident to the $d$-gon $\al$ in
the $d$-angulation $\cT$.

{\em Case~2:}
Next we consider two vertices $i\in \{1,\ldots,n\}$ and
$j\in \{n+1,\ldots,n+d-2\}$.
We claim that $m_{i,j}=m_{i,n}'+m_{i,1}'$.
For this, note that a $d$-path from $i$ to $j$ in $\P_{n+d-2}$
has the form
$$p_{i+1}, \ldots, p_n, p_{n+1}=\al, \al, \ldots, p_{j-1}=\al,
$$
and we distinguish the cases $p_n=\al$ and $p_n\ne \al$.
The $d$-paths with $p_n=\al$ correspond bijectively to $d$-paths
$p_{i+1}, \ldots, p_{n-1}$ from $i$ to $n$ in $\P_n$,
the ones with $p_n\ne \al$ correspond bijectively
to $d$-paths $p_{i+1}, \ldots, p_n$ from $i$ to $1$ in $\P_n$.
Similarly, $m_{j,i}=m_{1,i}'+m_{n,i}'$.
Here $d$-paths from $j$ to $i$ in $\P_{n+d-2}$
have the form $p_{j+1}=\al,\al \ldots, p_{n+d-2}=\al,p_1,\ldots, p_{i-1}$,
and we distinguish the cases $p_1=\al$ and $p_1\ne \al$.
The ones with $p_1=\al$ correspond bijectively to $d$-paths
$p_2, \ldots, p_{i-1}$ from $1$ to $i$ in $\P_n$,
the ones with $p_1\ne \al$ correspond bijectively
to $d$-paths $p_1, \ldots, p_{i-1}$ from $n$ to $i$ in $\P_n$.
Thus by induction and symmetry of $M'$, we have $m_{i,j}=m_{j,i}$.

{\em Case~3:}
Finally, we consider two vertices $i,j\in \{n+1,\ldots,n+d-2\}$.
When $i<j$, we clearly have $m_{i,j}=1$.
Now consider a $d$-path from $j$ to $i$; this has the form
$$p_{j+1}=\al, \ldots, p_{n+d-2}=\al, p_1, p_2, \ldots p_{n-1},p_n, p_{n+1}=\al, \ldots, p_{i-1}=\al \,.$$
Then the sequence $p_2,\ldots,p_{n-1}$ is a $d$-path
from $1$ to $n$ in $\P_n$; here necessarily each
$d$-gon with respect to the $d$-angulation $\cT'$
appears exactly $d-2$ times (cf. Remark~\ref{rem:d-path}),
hence we must have $p_1=\al=p_n$
in the original sequence.
Furthermore, we have by induction
$m_{1,n}'=m_{n,1}'=1$, i.e.,  the sequence $p_2,\ldots,p_{n-1}$ is unique
and thus also the $d$-path from $j$ to $i$ in $\P_{n+d-2}$,
giving $m_{j,i}=1$.

This completes the proof of the symmetry of the matrix~$M_\cT$.
\end{proof}

\begin{Remark}
In the proof above, we have provided some explicit bijections with 
good properties in the induction steps. Indeed, in these constructions 
$d$-paths from $i$ to $j$ correspond to $d$-paths from $j$ to $i$ where 
a complementary choice is applied relative to picking each $d$-gon
$d-2$ times in total.
In a suitable context (which we refrain from expounding in this paper) 
a precise result towards this complementary symmetry can be formulated 
and proved by a modification of the arguments above. 
One has to be careful, though, about the order of picking the $d$-gons 
along the $d$-paths. This is already illustrated by the correspondence 
of the one empty path from $i$ to $i+1$ and one full round from $i+1$ to 
$i$. Hence an explicit
bijection between the $d$-paths from $i$ to $j$ and $j$ to $i$ has to 
follow the geometry of the $d$-angulation very closely. 
\end{Remark}

In the course of the proof we have shown explicitly how the matrices
associated to a $d$-angulation and its restriction to the polygon obtained
by cutting off one boundary 
$d$-gon are related. For later usage we record this here,
keeping the notation of the induction step of the proof.

\begin{Corollary}\label{cor:recursion}
Let $\cT$, $\cT'$ be  as above, with associated matrices
$M_{\mathcal{T}}=(m_{i,j})_{1\le i,j\le n+d-2}$ and
$M'=(m_{i,j}')_{1\le i,j\le n}$.
Setting $r_i':=m_{i,1}'+m_{i,n}'= m_{1,i}' + m_{n,i}'$
for abbreviation, we then have
$$
M_{\mathcal{T}}=
\left(
\begin{array}{ccccc|cccc}
  & & & & & 1 & \ldots & \ldots & 1 \\
  & & & & & r_2' & \ldots & \ldots & r_2' \\
  && M' && & \vdots & & & \vdots \\
  & & & & & r'_{n-1} & \ldots & \ldots & r'_{n-1} \\
 & & & & & 1 & \ldots & \ldots & 1 \\
 \hline
 1 & r'_2 & \ldots & r'_{n-1} & 1 & 0 & 1 & \ldots & 1\\
 \vdots & \vdots & & \vdots & \vdots & 1 & 0 & \ddots &\vdots\\
 \vdots & \vdots & & \vdots & \vdots & \vdots & \ddots
 & \ddots & 1\\
 1 & r'_2 & \ldots & r'_{n-1} & 1 & 1 & \ldots & 1 & 0\\
\end{array}
\right).
$$
\end{Corollary}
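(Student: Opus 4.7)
The plan is to harvest each block of $M_\mathcal{T}$ directly from the three cases established in the proof of Theorem~\ref{thm:seq_matrix}, supplemented only by the basic identities of Remark~\ref{rem:d-path} and the induction hypothesis for $\mathcal{P}_n$. No new combinatorial input is needed, so the argument is essentially a bookkeeping exercise.

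For the top-left $n\times n$ block I would invoke Case~1 of that proof, which shows $m_{i,j}=m'_{i,j}$ for all $i,j\in\{1,\ldots,n\}$; hence this block equals $M'$. For the upper-right block (rows $i\in\{1,\ldots,n\}$, columns $j\in\{n+1,\ldots,n+d-2\}$), Case~2 gives the uniform value $m_{i,j}=m'_{i,1}+m'_{i,n}$, independent of the particular $j$ in that range; for $2\le i\le n-1$ this is exactly $r_i'$ by definition. At the corner rows $i=1$ and $i=n$, Remark~\ref{rem:d-path} gives $m'_{1,1}=m'_{n,n}=0$ and $m'_{n,1}=1$, while the induction hypothesis (symmetry of $M'$) yields $m'_{1,n}=m'_{n,1}=1$; thus $m_{1,j}=m_{n,j}=1$ as displayed. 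The lower-left block is then the transpose of the upper-right one by the symmetry of $M_\mathcal{T}$ already proved in Theorem~\ref{thm:seq_matrix}.

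For the lower-right $(d-2)\times(d-2)$ block, Case~3 gives $m_{i,j}=1$ whenever $i<j$ with both indices in $\{n+1,\ldots,n+d-2\}$; the case $i>j$ follows from the symmetry of $M_\mathcal{T}$, and the diagonal entries vanish by Remark~\ref{rem:d-path}. Since every entry has already been implicitly computed inside the proof of Theorem~\ref{thm:seq_matrix}, there is no substantive obstacle; the only point requiring mild care is the bookkeeping at the corner rows and columns $i\in\{1,n\}$, where $r_i'$ has not been defined but naturally evaluates to $1$, giving a consistent extension of the pattern.
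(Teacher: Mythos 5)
Your proposal is correct and matches the paper's treatment: the paper offers no separate proof, explicitly recording the corollary as the content of Cases 1--3 of the proof of Theorem~\ref{thm:seq_matrix}, with the corner entries handled exactly as you do via $m_{1,1}'+m_{1,n}'=1=m_{n,1}'+m_{n,n}'$ in the remark that follows. The only trivial quibble is that $r_1'$ and $r_n'$ \emph{are} covered by the given definition (and evaluate to $1$), rather than being undefined as you suggest.
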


\begin{Remark}
In particular, the rows in the matrix in the top right corner
and the columns in the matrix in the bottom left corner
are constant, as shown in Case~2 of the proof above. 
Moreover, as noted above, for $i=1$ and $i=n$ we have
$m_{1,1}'+m_{1,n}'=m_{1,n}'=1=m_{n,1}'=m_{n,1}'+m_{n,n}'$.
The entries in the top left and the bottom right corner
have been dealt with in Case~1 and Case~3, respectively. 
\end{Remark}

\begin{Remark}
Completely analogous to Definition~\ref{def:sequence} we could have
defined numbers $\bar m_{i,j}$ counting {\em clockwise} $d$-paths from
vertex $i$ to vertex $j$.
It is immediate from the definition that then $\bar m_{i,j}=m_{j,i}$
(just reverse the sequences obtained). Then Theorem~\ref{thm:seq_matrix}
implies that for every pair of vertices
we have $m_{i,j}=\bar m_{i,j}$, i.e., the number of $d$-paths
does not depend on whether we count 
clockwise or counterclockwise $d$-paths 
and we can omit mentioning the direction.
Note that this is not at all obvious from the original definition.
\end{Remark}

\section{An alternative description}
\label{sec:weights}

In this section we shall give an alternative description of
the numbers $m_{i,j}$ of $d$-paths from Definition~\ref{def:sequence}.
We will first present an inductive algorithm performed
on any $d$-angulation of a convex polygon, yielding a non-negative
integer $\widetilde{m}_{i,j}$ for any pair of vertices $i,j$ of the
polygon. As the main result of this section we will then
show that these new numbers agree with the numbers $m_{i,j}$ counting
$d$-paths.

\begin{Definition} \label{def:weights}
Let $n=d + m(d-2)$ for some integers $d\ge 3$ and $m\in \mathbb{N}_0$.
For any $d$-angulation
$\cT$ of~$\mathcal{P}_n$ we define an
$n\times n$-matrix $\widetilde{M}_\cT=(\widetilde{m}_{i,j})$ by the
following combinatorial procedure.

Consider a vertex $i\in \{1,\ldots,n\}$ of $\mathcal{P}_n$. To any vertex
$j$ of $\mathcal{P}_n$
we want to assign inductively
a non-negative integer $\widetilde{m}_{ij}$.
We set $\widetilde{m}_{i,i}=0$, and $\widetilde{m}_{i,j}=1$ if
there is a $d$-gon in $\cT$ containing $i$ and $j$. For an arbitrary
$d$-gon $\al$ in $\cT$ we can inductively assume that two of its
vertices, say $k$ and $\ell$, have already been assigned numbers
$\widetilde{m}_{i,k}$ and $\widetilde{m}_{i,\ell}$, respectively. Then for
any other vertex $v$ in $\al$ we set
$\widetilde{m}_{i,v}= \widetilde{m}_{i,k} + \widetilde{m}_{i,\ell}$.
(One can easily convince oneself that this procedure is well-defined.)
\end{Definition}

\begin{Example} \label{ex:12gon}
Let us consider an example for the case $d=4$ and $m=4$ to illustrate
this combinatorial algorithm. We consider the quadrangulation
of a $12$-gon on the left in the following figure
\[
  \begin{tikzpicture}[auto]
    \node[name=s, shape=regular polygon, regular polygon sides=12, minimum size=3cm, draw] {};
    \draw[thick] (s.corner 1) to (s.corner 4);
    \draw[thick] (s.corner 4) to (s.corner 7);
    \draw[thick] (s.corner 7) to (s.corner 12);
    \draw[thick] (s.corner 8) to (s.corner 11);
  \draw[shift=(s.corner 1)]  node[above]  {{\small 1}};
  \draw[shift=(s.corner 2)]  node[above]  {{\small 2}};
  \draw[shift=(s.corner 3)]  node[left]  {{\small 3}};
  \draw[shift=(s.corner 4)]  node[left]  {{\small 4}};
  \draw[shift=(s.corner 5)]  node[left]  {{\small 5}};
  \draw[shift=(s.corner 6)]  node[left]  {{\small 6}};
  \draw[shift=(s.corner 7)]  node[below]  {{\small 7}};
  \draw[shift=(s.corner 8)]  node[below]  {{\small 8}};
  \draw[shift=(s.corner 9)]  node[right]  {{\small 9}};
  \draw[shift=(s.corner 10)]  node[right]  {{\small 10}};
  \draw[shift=(s.corner 11)]  node[right]  {{\small 11}};
  \draw[shift=(s.corner 12)]  node[right]  {{\small 12}};
  \end{tikzpicture}
\hskip1.5cm
\begin{tikzpicture}[auto]
    \node[name=s, shape=regular polygon, regular polygon sides=12, minimum size=3cm, draw] {};
    \draw[thick] (s.corner 1) to (s.corner 4);
    \draw[thick] (s.corner 4) to (s.corner 7);
    \draw[thick] (s.corner 7) to (s.corner 12);
    \draw[thick] (s.corner 8) to (s.corner 11);
  \draw[shift=(s.corner 1)]  node[above]  {\textcircled{{\small 0}}};
  \draw[shift=(s.corner 2)]  node[above]  {\textcircled{{\small 1}}};
  \draw[shift=(s.corner 3)]  node[left]  {\textcircled{{\small 1}}};
  \draw[shift=(s.corner 4)]  node[left]  {\textcircled{{\small 1}}};
  \draw[shift=(s.corner 5)]  node[left]  {\textcircled{{\small 2}}};
  \draw[shift=(s.corner 6)]  node[left]  {\textcircled{{\small 2}}};
  \draw[shift=(s.corner 7)]  node[below]  {\textcircled{{\small 1}}};
  \draw[shift=(s.corner 8)]  node[below]  {\textcircled{{\small 2}}};
  \draw[shift=(s.corner 9)]  node[right]  {\textcircled{{\small 4}}};
  \draw[shift=(s.corner 10)]  node[right]  {\textcircled{{\small 4}}};
  \draw[shift=(s.corner 11)]  node[right]  {\textcircled{{\small 2}}};
  \draw[shift=(s.corner 12)]  node[right]  {\textcircled{{\small 1}}};
  \end{tikzpicture}
\]
We compute the numbers $\widetilde{m}_{1,j}$ which are
indicated by the encircled numbers on the right in the figure.
In the first step we set $\widetilde{m}_{1,1}=0$ and
$\widetilde{m}_{1,2}=\widetilde{m}_{1,3}=
\widetilde{m}_{1,4}=\widetilde{m}_{1,7}=\widetilde{m}_{1,12}=1$
since these are the vertices
appearing in a common quadrangle with vertex 1. In the next step
we set $\widetilde{m}_{1,5}=\widetilde{m}_{1,6}=
\widetilde{m}_{1,4}+\widetilde{m}_{1,7} = 1+1=2$ (for the quadrangle
with vertices $4,5,6,7$), and
$\widetilde{m}_{1,8}=\widetilde{m}_{1,11}=
\widetilde{m}_{1,7}+\widetilde{m}_{1,12} = 1+1=2$ (for the quadrangle
with vertices $7,8,11,12$).
Finally, the remaining vertices $9$ and $10$ get assigned
$\widetilde{m}_{1,9}=\widetilde{m}_{1,10}=
\widetilde{m}_{1,8} + \widetilde{m}_{1,11} = 2 + 2 = 4$.

In this way we obtain the following matrix (we leave the computational
details to the reader),
$$\widetilde{M}_\cT=
\left(
\begin{array}{cccccccccccc}
0 & 1 & 1 & 1 & 2 & 2 & 1 & 2 & 4 & 4 & 2 & 1 \\
1 & 0 & 1 & 1 & 3 & 3 & 2 & 4 & 8 & 8 & 4 & 2 \\
1 & 1 & 0 & 1 & 3 & 3 & 2 & 4 & 8 & 8 & 4 & 2 \\
1 & 1 & 1 & 0 & 1 & 1 & 1 & 2 & 4 & 4 & 2 & 1 \\
2 & 3 & 3 & 1 & 0 & 1 & 1 & 3 & 6 & 6 & 3 & 2 \\
2 & 3 & 3 & 1 & 1 & 0 & 1 & 3 & 6 & 6 & 3 & 2 \\
1 & 2 & 2 & 1 & 1 & 1 & 0 & 1 & 2 & 2 & 1 & 1 \\
2 & 4 & 4 & 2 & 3 & 3 & 1 & 0 & 1 & 1 & 1 & 1 \\
4 & 8 & 8 & 4 & 6 & 6 & 2 & 1 & 0 & 1 & 1 & 2 \\
4 & 8 & 8 & 4 & 6 & 6 & 2 & 1 & 1 & 0 & 1 & 2 \\
2 & 4 & 4 & 2 & 3 & 3 & 1 & 1 & 1 & 1 & 0 & 1 \\
1 & 2 & 2 & 1 & 2 & 2 & 1 & 1 & 2 & 2 & 1 & 0 \\
\end{array}
\right).
$$
\end{Example}

We now come to the main result of this section. 

\begin{Theorem} \label{thm:weights}
Let $n=d+m(d-2)$, with integers $d\ge 3$ and $m\in \N_0$, and
let $\cT$ be a $d$-angulation of $\mathcal{P}_n$. Then
we have $\widetilde{M}_\cT = M_\cT .$
In particular, the matrix $\widetilde{M}_\cT$ is symmetric.
\end{Theorem}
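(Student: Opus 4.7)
\emph{Plan.} I would prove the theorem by induction on the number $m$ of diagonals of $\cT$, using the explicit block recursion for $M_\cT$ given in Corollary~\ref{cor:recursion}. The base case $m=0$ is immediate: $\cT$ is a single $d$-gon, and both $M_\cT$ and $\widetilde{M}_\cT$ coincide with the matrix having $0$ on the diagonal and $1$ elsewhere. For the inductive step I would cut off a boundary $d$-gon $\alpha$ along a diagonal $\{1,n\}$, exactly as in the proof of Theorem~\ref{thm:seq_matrix}, let $\cT'$ be the restricted $d$-angulation of $\mathcal{P}_n$, and use the induction hypothesis $M':=M_{\cT'}=\widetilde{M}_{\cT'}$.

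The task is then to show that $\widetilde{M}_\cT$ matches the block form of Corollary~\ref{cor:recursion}. The cases where both indices are old, where the row index is old and the column index is new, and where both indices are new, all reduce to direct checks of the algorithm: for an old starting vertex $i$ the propagation on $\cT$ restricts to the propagation on $\cT'$ on the old vertices (since $\alpha$ sits as a leaf in the dual tree and only produces values at its new vertices), and the value at a new vertex is precisely $\widetilde{m}_{i,1}^\cT + \widetilde{m}_{i,n}^\cT$ computed through $\alpha$, matching $r_i'$; for two new indices the value follows directly from the initialisation inside $\alpha$.

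The main obstacle is the remaining case, where the starting vertex $i$ is new and $j\in\{2,\ldots,n-1\}$. Here the algorithm's root lies outside $\cT'$, so the induction hypothesis cannot be applied directly to the $i$-th row. The key identity to establish is
\[
\widetilde{m}_{i,j}^{\cT} \;=\; \widetilde{m}_{1,j}^{\cT'} + \widetilde{m}_{n,j}^{\cT'},
\]
after which induction and the symmetry of $M'$ (Theorem~\ref{thm:seq_matrix}) yield $\widetilde{m}_{i,j}^\cT = m'_{j,1} + m'_{j,n} = r'_j$, as demanded by Corollary~\ref{cor:recursion}. To prove the identity, note that both sides evaluate to $1$ at $j=1$ and $j=n$ (the initial data carried from $\alpha$ into $\cT'$ through the $d$-gon $\beta$ sharing the edge $\{1,n\}$ with $\alpha$), and both are governed by the same local rule on the $d$-gons of $\cT'$. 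The subtle point is that for every $d$-gon $\gamma$ of $\cT'$ the two propagations from $1$ and from $n$ use the \emph{same} input edge of $\gamma$; this holds because $1$ and $n$ both lie in $\beta$, so the dual subtrees of $d$-gons containing $1$ and containing $n$ both include $\beta$, making their union connected in the dual tree and giving $\gamma$ a single nearest edge towards that union. Once the input edges match, additivity of the local rule passes sums of inputs to sums of outputs, and well-definedness of the algorithm forces the claimed equality. The symmetry of $\widetilde{M}_\cT$ asserted in the theorem then follows at once from $\widetilde{M}_\cT = M_\cT$ and Theorem~\ref{thm:seq_matrix}.
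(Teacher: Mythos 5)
Your plan is correct, but it is organized differently from the paper's proof, and the difference is worth spelling out. The paper also inducts by peeling off boundary $d$-gons, but it argues \emph{entry by entry}: for a fixed pair $(i,j)$ it cuts off a boundary $d$-gon containing the \emph{target} $j$ (after first reducing to a subpolygon where the relevant $d$-gon is a boundary one), so that Case~2 of the proof of Theorem~\ref{thm:seq_matrix} gives $m_{i,j}=m'_{i,k}+m'_{i,\ell}$ in exact parallel with the defining recursion $\widetilde{m}_{i,j}=\widetilde{m}_{i,k}+\widetilde{m}_{i,\ell}$ of Definition~\ref{def:weights}; the source vertex $i$ always stays in the smaller polygon, so the awkward configuration never arises. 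You instead fix one boundary $d$-gon once and for all and match the \emph{whole matrix} against Corollary~\ref{cor:recursion}, which buys a cleaner global induction but forces you to treat the case ``$i$ new, $j$ old,'' which the paper's choice of cut avoids entirely (the paper only meets its transpose, and gets the other half for free from the already-proved symmetry of $M_\cT$). Your extra lemma $\widetilde{m}^{\cT}_{i,j}=\widetilde{m}^{\cT'}_{1,j}+\widetilde{m}^{\cT'}_{n,j}$ is true and your tree-theoretic justification is essentially right: since the fans of $d$-gons at the vertices $1$ and $n$ both contain the $d$-gon $\beta$ adjacent to the cut diagonal, their union is a subtree of the dual tree, so every other $d$-gon has a well-defined first edge towards it, and that edge agrees with the input edge of the propagation from $i$ through $\alpha$.

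One point in your sketch deserves explicit care: for a $d$-gon $\gamma$ that \emph{contains} the vertex $1$ (or $n$) but is not $\beta$, the propagation from $1$ does not ``use an input edge'' at all --- all vertices of $\gamma$ are initialised to $1$ in the first step of Definition~\ref{def:weights} --- so the statement ``the two propagations use the same input edge of $\gamma$'' is not literally meaningful there. The additivity argument still goes through, because the input edge of $\gamma$ seen from $n$ (equivalently from $i$) is necessarily a diagonal $\{1,w\}$ through the vertex $1$ (any diagonal on the dual-tree path between two cells sharing the vertex $1$ has $1$ as an endpoint), and then $\widetilde{m}^{\cT'}_{1,1}+\widetilde{m}^{\cT'}_{1,w}=0+1=1$ reproduces the initialised value, so the function $\widetilde{m}^{\cT'}_{1,\cdot}$ \emph{does} satisfy the local rule at $\gamma$ with that input edge. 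With this verification added, your induction closes and the argument is complete; the concluding deduction of symmetry from $\widetilde{M}_\cT=M_\cT$ and Theorem~\ref{thm:seq_matrix} is exactly as in the paper.
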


\begin{proof}
Once the equality $\widetilde{M}_\cT = M_\cT$ has been proven the
symmetry of $\widetilde{M}_\cT$ follows from
Theorem~\ref{thm:seq_matrix}.

So we have to show that for any vertices $i,j$ the numbers $m_{i,j}$
from Definition~\ref{def:sequence} and $\widetilde{m}_{i,j}$ from
Definition~\ref{def:weights} coincide.

If $i=j$ then $m_{i,i}=0=\widetilde{m}_{i,i}$ by
Remark~\ref{rem:d-path} and Definition~\ref{def:weights}, respectively.

Now let $i$ and $j$ be different vertices which belong to
a common $d$-gon $\beta$
of $\mathcal{T}$. Then
$\widetilde{m}_{i,j}=1$ by Definition~\ref{def:weights}. So we have to show that
also the number $m_{i,j}$ of $d$-paths is equal to 1.

Recall Case~1 of the proof of Theorem~\ref{thm:seq_matrix};
there it has been
shown that if $\mathcal{P}_n$ is divided as
$\mathcal{P}_n=\mathcal{P}' \cup \alpha$ where $\alpha$ is a boundary
$d$-gon of $\mathcal{T}$, then for any vertices $i,j$ of $\mathcal{P}'$
we have $m_{i,j}=m'_{i,j}$, i.e., the numbers of $d$-paths can be
computed entirely within the smaller polygon $\mathcal{P}'$.

In our situation the vertices $i$ and $j$ belong to a common $d$-gon
$\beta$ of $\mathcal{T}$. From $\mathcal{P}_n$ we
can hence successively remove boundary $d$-gons until only $\beta$
is left, and the proof of Case~1 of Theorem~\ref{thm:seq_matrix}
tells us that in each removal step the number 
of $d$-paths from $i$ to $j$ is not changed.
Thus, it suffices to show that $m_{i,j}=1$ for a single $d$-gon $\beta$;
but this is obvious since the only possible $d$-path is
$p_{i+1}=\beta,\beta,\ldots,\beta=p_{j-1}$.

Finally, we consider the case that $i$ and $j$ are not contained
in a common $d$-gon of $\mathcal{T}$.
By Definition~\ref{def:weights} we have
$\widetilde{m}_{i,j} = \widetilde{m}_{i,k} + \widetilde{m}_{i,\ell}$
where $k$ and $\ell$ are two vertices in a common $d$-gon
$\alpha$ with $j$ where inductively the numbers
$\widetilde{m}_{i,k}$ and $\widetilde{m}_{i,\ell}$ have already been
defined.

Note that by the same argument as used above, we may assume that 
$\al$ is a boundary $d$-gon;  
but then we are exactly in the situation dealt with in Case~2 of the 
proof of Theorem~\ref{thm:seq_matrix}.
There it has been shown that in this
situation we have
$m_{i,j} = m'_{i,k} + m'_{i,\ell}$
where $m'_{i,k}$ and $m'_{i,\ell}$ are the numbers of $d$-paths
computed in the smaller polygon (without $\alpha$).
Inductively, we can assume that $m'_{i,k}=\widetilde{m}_{i,k}$
and $m'_{i,\ell}=\widetilde{m}_{i,\ell}$. Putting everything together
we conclude that
$m_{i,j} = \widetilde{m}_{i,k} + \widetilde{m}_{i,\ell} =
\widetilde{m}_{i,j}$.

This completes the proof of Theorem~\ref{thm:weights}.
\end{proof}

\section{Determinants and elementary divisors}
\label{sec:det}

This section is devoted to the proof of Theorem~\ref{thm:smith}.
\smallskip

We start by considering the matrix for a single $d$-gon, without
any diagonals, and denote it by $M_d$, thus
$$
M_d= 
\begin{pmatrix}
  0 & 1 & \ldots & \ldots & 1 \\
  1 & \ddots & \ddots & * & \vdots\\
 \vdots & \ddots &\ddots & \ddots & \vdots  \\
 \vdots & * &  \ddots   &  \ddots & 1 \\
 1 & \ldots & \ldots & 1 & 0
\end{pmatrix}.
$$
The determinants of matrices of the form $aM_d+bE_d$
($E_d$ the identity matrix) 
are wellknown (and easy to compute); we have
$$\det M_d = (-1)^{d-1} (d-1)\:.$$

For a sequence $a_1, \ldots, a_n$ we let $\Delta(a_1,\ldots,a_n)$
be the diagonal matrix with diagonal entries $a_1, \ldots, a_n$;
occasionally we collect multiple entries $a,\ldots,a$
(multiplicity $m$, say) and write
this in exponential form~$a^m$.
For a matrix $A$, we denote by $\cS(A)$ its Smith normal form,
i.e. the diagonal matrix with elementary divisors on the diagonal.
Background material and details on Smith normal forms and elementary 
divisors can be found in several textbooks, e.g. in Chapter 2 of
\cite{Newman}.

We now prove by induction that
$$\cS(M_d)= \Delta(1^{d-1},d-1)\, \text{for } d > 1\:.$$
For $d=2$, this is clearly true. In the induction step,
since $M_{d-1}$ has $1$ as an elementary divisor with multiplicity $d-2$,
the matrix
$M_d$ must have $1$ as an elementary divisor with multiplicity at least $d-2$.
Now the $(d-1)^{\text{st}}$ elementary divisor of $M_d$ divides the minor
$\det M_{d-1}=(-1)^{d-2}(d-2)$ of $M_d$
as well as $\det M_d=(-1)^{d-1}(d-1)$, hence we get one more elementary divisor~1,
and the final elementary divisor must be~$d-1$.
(Recall that the $i^{th}$ elementary divisor is the quotient of
the greatest common divisor of the minors of size $i$ and $i-1$,
respectively.) 
\smallskip

Thus we have the start $m=0$ of our induction, where we have $m+1$ $d$-gons in the $d$-angulation of the $n$-gon to be considered, i.e.,
$n=d+m(d-2)$.

\smallskip
Now assume we have already proved the result for $d$-angulations of
$\mathcal{P}_n$,
with $n=d+m(d-2)$,
and we want to prove the claim in the induction step
for a $d$-angulation of $\P_{n+d-2}$.

We know that there is a boundary
$d$-gon in the triangulation that is cut off from $\P_{n+d-2}$
by one diagonal; w.l.o.g. we may assume the diagonal to be between 
the vertices $1$ and~$n$ (in fact, relabelling the vertices of the 
polygon means
conjugation of the matrices $M_{\cT}$, and it is well known that 
conjugate matrices have the same determinant and the same elementary 
divisors). 
We have precise information on the relation to the remaining 
$n$-gon $\P_n$,
given for the respective matrices $M:=M_{\mathcal{T}}$ and $M'$ in Corollary~\ref{cor:recursion}.

The special structure of $M$ allows to transform this easily into a better form.
First, we subtract the sum of columns $1$ and $n$ from each column $n+1$ up to $n+d-2$.
This produces the zero matrix in the upper right corner, and transforms
the matrix $M_{d-2}$ sitting in the lower diagonal block into
$Z_{d-2}=-2E_{d-2}-M_{d-2}$.
Then subtracting the sum of rows $1$ and $n$ of the transformed matrix from
each row $n+1$ up to $n+d-2$ gives a block diagonal matrix with blocks $M'$ and $Z_{d-2}$.
Note that we have only used elementary row and column operations
that did not change either the determinant or the Smith normal form.

It is wellknown that $\det Z_{d-2} = (-1)^{d-2}(d-1)$; 
hence we have by induction
$$\det M = \det M' \cdot \det Z_{d-2} =
(-1)^{n-1+d-2} (d-1)^{m+2}
$$
as claimed.

Also the Smith normal form is easily determined.
By a similar induction argument as applied before for the matrix $M_d$, we have
$$\cS(Z_d)= \Delta(1^{d-1},d+1)\, \text{for } d \ge 1\:.$$
By induction, we have
$$\cS(M')= \Delta(1^{n-(m+1)},(d-1)^{m+1})\, \text{for } d > 1\:.$$
As $M$ is the block diagonal sum of $Z_{d-2}$ and $M'$, we get for $d>1$
$$\cS(M)= \Delta(1^{n-(m+1)+d-3},(d-1)^{m+2})= \Delta(1^{d+(m+1)(d-2)-(m+2)},(d-1)^{m+2})\:,$$
as desired, completing the proof of Theorem~\ref{thm:smith}.
\qed

\section{Frieze patterns from higher angulations}
\label{sec:friezes}

In this section we generalize Conway and Coxeter's frieze patterns
(which come from triangulations) to certain patterns of positive
integers (coming from $d$-angulations). It turns out that for this
generalized class of patterns we no longer
have the determinant $1$ condition for the $2 \times 2$ diamonds.
Instead, they all have determinant $0$ or $1$, and we will give
a combinatorial characterization of which of these
$2\times 2$-minors have determinant $0$; for triangulations
this latter case doesn't appear so that we also get as special case
a proof of the Conway-Coxeter result that triangulations
indeed give frieze patterns.

Exactly as for the case of triangulations one can
use the entries in the matrices $M_\cT$ to produce patterns of
integers. More precisely, take the upper (or lower) triangular
part of the matrix $M_\cT$ and use it as a fundamental region;
from this fundamental region the entire pattern is created by
applying successive glide reflections, as indicated in the following
picture.
\[
  \xymatrix @-0.7pc @!0 {
    & & & & *{} \ar@{-}[dddrrr] & *{} \ar@{-}[dddrrr] \ar@{-}[rrrrrr] & & & & & &*{} & & & & *{} \ar@{-}[dddrrr] & *{} \ar@{-}[dddrrr] \ar@{-}[rrrrrr] & & & & & &*{}&*{}\\
    & & *{\cdots} & & & & & & & & & & & & & & & & & & & & *{} &\\
    & & & & & & & & & & & \cdots & & & & & & & & & &*{} & \ldots &\\
    & *{} \ar@{-}[uuurrr] \ar@{-}[rrrrrr] & & & & & & *{} & *{} \ar@{-}[uuurrr]& & & & *{} \ar@{-}[uuurrr] \ar@{-}[rrrrrr] & & & & & & *{} & *{} \ar@{-}[uuurrr]& *{}& & &\\
                        }
\]

Our aim is to generalize the defining determinant condition of Conway-Coxeter frieze
patterns. To this end we have to
consider the adjacent
$2\times 2$-minors in the matrices $M_\cT$ including the ones
between the last and first column (which correspond to the $2\times 2$-minors
appearing at the boundaries of the shifted fundamental regions).
Since the rows and columns of the matrices $M_\cT$ are indexed by
the vertices of the  $n$-gon $\mathcal{P}_n$
(in counterclockwise order), any such $2\times 2$-minor is
determined by a pair of boundary edges. For any such pair of boundary
edges $e=(i,i+1)$ and $f=(j,j+1)$ the corresponding minor has the form
$$d(e,f) := \det 
\begin{pmatrix} m_{i,j} & m_{i,j+1} \\ m_{i+1,j} & m_{i+1,j+1}
\end{pmatrix}
$$
where indices are to be reduced modulo $n$.

\begin{Theorem} \label{thm:hinge}
Let $\cT$ be a $d$-angulation of $\mathcal{P}_n$. 
Then the following holds for the $2\times 2$-minors of $M_\cT$ 
associated to boundary edges $e\neq f$ of $\mathcal{P}_n$:  
\begin{enumerate}
\item[{(a)}] $d(e,e)=-1$.
\item[{(b)}] $d(e,f)\in \{0,1\}$.
\item[{(c)}] $d(e,f)=1$ if and only if
there exists a sequence $e=z_0,z_1,\ldots,z_{s-1},z_{s}=f$, where $z_1, \ldots, z_{s-1}$ are diagonals in~$\cT$, such that the following holds for every $k\in\{0,1,\ldots,s-1\}$:
\begin{itemize}
\item[{(i)}] $z_k$ and $z_{k+1}$ belong to a common $d$-gon $p_k$
in $\mathcal{T}$;
\item[{(ii)}] the $d$-gons $p_k$ are pairwise different;
\item[{(iii)}] $z_{k}$ is incident to $z_{k+1}$.
\end{itemize}
\end{enumerate}
\end{Theorem}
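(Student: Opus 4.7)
The plan is to prove part (a) directly, and parts (b) and (c) simultaneously by induction on the number $m$ of diagonals of $\mathcal{T}$, using the recursive block structure of $M_\mathcal{T}$ supplied by Corollary~\ref{cor:recursion}. Part (a) follows at once from the symmetry of $M_\mathcal{T}$ (Theorem~\ref{thm:seq_matrix}) together with $m_{i,i}=0$ and $m_{i,i+1}=1$ (Remark~\ref{rem:d-path}): the $2\times 2$ submatrix becomes $\bigl(\begin{smallmatrix}0&1\\1&0\end{smallmatrix}\bigr)$ with determinant $-1$. For the base case $m=0$, where $\mathcal{P}_n=\mathcal{P}_d$ is a single $d$-gon with $m_{i,j}=1$ for $i\neq j$, a direct check confirms that $d(e,f)=1$ exactly when $e$ and $f$ share a vertex, matching the sequence criterion via the trivial choice $s=1$, $p_0=\mathcal{P}_d$.

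For the inductive step, I would cut off a boundary $d$-gon $\alpha$ along a diagonal $t=(1,n)$, obtaining the restricted $d$-angulation $\mathcal{T}'$ of $\mathcal{P}_n$ with matrix $M'=(m'_{i,j})$. The explicit form of $M_\mathcal{T}$ in Corollary~\ref{cor:recursion}, with $r'_i=m'_{i,1}+m'_{i,n}$ (and the boundary convention $r'_1=r'_n=1$), then lets me compute each minor $d(e,f)$ in closed form. I would split into three cases according to where $e=(i,i+1)$ and $f=(j,j+1)$ sit relative to $\alpha$. In Case~A (both of $e,f$ on the boundary of $\mathcal{P}_n$), the $2\times 2$ submatrix lies entirely in the $M'$ block, so $d(e,f)=d'(e,f)$; on the combinatorial side, no sequence in $\mathcal{T}$ from $e$ to $f$ can involve $p_k=\alpha$, since $t$ is the unique diagonal of $\mathcal{T}$ inside $\alpha$ and such a step would force $z_k=z_{k+1}=t$ and hence be trimmable. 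Thus $\mathcal{T}$-sequences and $\mathcal{T}'$-sequences are in bijection and the inductive hypothesis applies.

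Case~B ($e$ on the boundary of $\mathcal{P}_n$, $f$ on the boundary of $\alpha$) splits further. If $f$ is an interior boundary edge of $\alpha$ (not sharing a vertex with $t$), then columns $j$ and $j+1$ of $M_\mathcal{T}$ are identical in rows $\le n$ by row-constancy of the top-right block, so $d(e,f)=0$; combinatorially, any sequence ending at $f$ must have $p_{s-1}=\alpha$ and $z_{s-1}=t$, but $t$ is not incident to $f$, ruling out any sequence. If instead $f$ is an extreme boundary edge of $\alpha$ (sharing a vertex with $t$), a direct expansion using $m_{i,n+1}=\cdots=m_{i,n+d-2}=r'_i$ collapses the minor to $d(e,f)=m'_{i,n}m'_{i+1,1}-m'_{i,1}m'_{i+1,n}=d'(e,t)$, where on the right $t=(n,1)$ is viewed as a boundary edge of $\mathcal{P}_n$; induction together with the bijection that extends a $\mathcal{T}'$-sequence from $e$ to $t$ to a $\mathcal{T}$-sequence from $e$ to $f$ (by appending $p_s=\alpha$ and $z_{s+1}=f$) closes this subcase. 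Case~C (both of $e,f$ on the boundary of $\alpha$) is handled by direct inspection of the bottom-right $M_{d-2}$-type block together with the top-right and bottom-left strips: $d(e,f)=1$ exactly when $e,f$ share a vertex of $\alpha$, matching the one-step sequence with $p_0=\alpha$; otherwise the required distinctness of the $p_k$ prevents using $\alpha$ twice, so no longer sequence can succeed either.

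The hardest point will be the algebraic collapse in the extreme-edge subcase of Case~B---verifying that the cross-cancellation really reduces to $d'(e,t)$ in all boundary instances (in particular for $i=1$ and $i+1=n$, where the convention $r'_1=r'_n=1$ must be invoked)---and matching this algebraic identity with the combinatorial bijection that inserts a single-step detour through $\alpha$ at the end of a $\mathcal{T}'$-sequence terminating at the boundary edge $t$.
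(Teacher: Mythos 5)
Your proposal is correct and follows essentially the same route as the paper: induction by cutting off a boundary $d$-gon $\alpha$ along a diagonal $t$, showing that interior edges of $\alpha$ give equal columns (hence $d(e,f)=0$ with no admissible sequence), that extreme edges collapse via $m_{i,j+1}=m_{i,u}+m_{i,v}$ to $d(e,f)=d_{\mathcal{P}'}(e,t)$ matched by appending/truncating the final step through $\alpha$, and that the common-$d$-gon case is settled by direct computation. The only cosmetic difference is that you read off the needed relations from the block form of Corollary~\ref{cor:recursion}, whereas the paper invokes the weight algorithm of Definition~\ref{def:weights} via Theorem~\ref{thm:weights}; these encode the same identities.
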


\begin{Example}
Before entering the proof
let us illustrate the theorem with the
4-angulation $\cT$ of the 12-gon given in Example~\ref{ex:12gon}.
We consider the boundary edge $e=(4,5)$. Then we get the following
five sequences satisfying the
conditions in Theorem~\ref{thm:hinge}:
$e=(4,5),(5,6)=f$;
$e=(4,5),(4,7),(1,4),(3,4)=f$;
$e=(4,5),(4,7),(1,4),(1,2)=f$;
$e=(4,5),(4,7),(7,12),(7,8)=f$;
$e=(4,5),(4,7),(7,12),(11,12)=f$.

Note that, for instance, the sequence $e=(4,5),(4,7),(1,4),(1,12)$
is not allowed, since $(1,12)$ and $(1,4)$ belong to the same
quadrangle as $(4,7)$ and $(1,4)$ in the step before, i.e., 
condition (ii) of Theorem~\ref{thm:hinge} is violated.
By direct inspection of the matrix 
$\widetilde{M}_\cT=(\widetilde{m}_{ij})$
given in Example~\ref{ex:12gon} one observes that indeed the
non-zero $2\times 2$-minors appear for $j\in \{1,3,5,7,11\}$,
corresponding to the boundary edges $(j,j+1)$
which appear as destinations of the
above five sequences.
\end{Example}

The special case $d=3$, i.e., triangulations of polygons,
gives back the case of frieze patterns
in the sense of Conway and Coxeter.

\begin{Corollary}[\cite{CC1},\cite{CC2}]
Let $\cT$ be a triangulation of $\mathcal{P}_n$
and $M_\cT$ the corresponding matrix. Then all
adjacent $2\times 2$-minors are 1 and hence the above
construction produces a frieze pattern of integers.
\end{Corollary}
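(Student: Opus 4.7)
The plan is to deduce the Corollary directly from Theorem~\ref{thm:hinge} applied to the case $d=3$. Since the corollary concerns adjacent $2\times 2$-minors in the frieze pattern, which correspond to pairs of distinct boundary edges $e \neq f$ of $\mathcal{P}_n$, we know from Theorem~\ref{thm:hinge}(b) that $d(e,f) \in \{0,1\}$. So it suffices to verify that the combinatorial criterion of Theorem~\ref{thm:hinge}(c) is always satisfied for triangulations. Observe first that for $d=3$ every $d$-gon is a triangle and any two distinct edges of a triangle meet in a vertex, so condition (iii) of Theorem~\ref{thm:hinge}(c) is automatic and only (i) and (ii) need to be arranged.

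The key device is the dual graph $\mathcal{D}$ of the triangulation $\cT$: its vertices are the $n-2$ triangles of $\cT$, and two triangles are joined by an edge of $\mathcal{D}$ precisely when they share a diagonal. Standard (and easy) induction on $n$ shows that $\mathcal{D}$ is a tree with $n-3$ edges.

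Now let $e\neq f$ be two boundary edges of $\mathcal{P}_n$, and let $T_e$ respectively $T_f$ denote the unique triangles of $\cT$ containing them (unique because a boundary edge lies in just one triangle). If $T_e=T_f$, then the one-step sequence $z_0=e$, $z_1=f$ with $p_0=T_e$ already satisfies (i)--(iii). Otherwise, take the unique path $T_e=p_0,p_1,\ldots,p_{s-1}=T_f$ in $\mathcal{D}$ (with $s\geq 2$), and for each $k\in\{1,\ldots,s-1\}$ let $z_k$ be the diagonal of $\cT$ shared by the consecutive triangles $p_{k-1}$ and $p_k$. Setting $z_0=e$ and $z_s=f$, each pair $z_k,z_{k+1}$ consists of two distinct edges of the triangle $p_k$, verifying (i); the triangles $p_0,\ldots,p_{s-1}$ are pairwise distinct because they form a path in a tree, verifying (ii); and (iii) is automatic as noted. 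Hence Theorem~\ref{thm:hinge}(c) gives $d(e,f)=1$.

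There is essentially no real obstacle here once Theorem~\ref{thm:hinge} is available; the only small subtlety is to handle the case $T_e=T_f$ separately and to note that the automatic condition (iii) is exactly the feature that distinguishes $d=3$ from higher $d$, where two edges of a $d$-gon need not be incident. This is why the vanishing case $d(e,f)=0$ from Theorem~\ref{thm:hinge}(b) can be ruled out precisely for triangulations, recovering the Conway--Coxeter determinant condition.
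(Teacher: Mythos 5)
Your proof is correct and follows essentially the same route as the paper: the paper's own (very brief) argument likewise observes that condition (iii) of Theorem~\ref{thm:hinge} is vacuous for triangles and that any two boundary edges can be joined through a sequence of pairwise distinct neighbouring triangles. Your use of the dual tree merely makes explicit the path of distinct triangles that the paper asserts exists, so it is a more detailed write-up of the same idea rather than a different approach.
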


\begin{proof}
In a triangulation one can go from any boundary edge $e$
to any other boundary edge $f$ via a sequence of different
neighbouring triangles. The crucial point is that
for triangulations condition (iii) of Theorem~\ref{thm:hinge}
is empty. Hence there is always a suitable sequence from $e$ to
$f$ satisfying the conditions of Theorem~\ref{thm:hinge}.
\end{proof}

We now come to the proof of the main result of this section.

\begin{proof} (of Theorem~\ref{thm:hinge})
We use Theorem~\ref{thm:weights}
throughout the proof, i.e.,  that the numbers
$m_{i,j}=\widetilde{m}_{i,j}$ can be computed using the combinatorial
algorithm from Definition~\ref{def:weights}. 

(a) If $e=(i,i+1)$ then by Definition~\ref{def:weights}  we have
$$d(e,e) = \det \begin{pmatrix} m_{i,i} & m_{i,i+1} \\ m_{i+1,i} & m_{i+1,i+1}
\end{pmatrix}
= \det \begin{pmatrix} 0 & 1 \\ 1 & 0 \end{pmatrix} = -1.
$$
\smallskip

(b) and (c) Let $e=(i,i+1)$ and $f=(j,j+1)$ be different boundary edges
of $\mathcal{P}_n$.

{\it Case~1:} $e$ and $f$ belong to a common $d$-gon of $\mathcal{T}$.

Then, if there is a sequence as in the theorem, 
we must have $s=1$ since the $d$-gons have to be pairwise 
different by condition (ii); 
condition (iii) then forces $e$ and $f$ have a common endpoint.

Thus we have to show that $d(e,f)=1$
if $e$ and $f$ share an endpoint, and $d(e,f)=0$ otherwise. 

If $e$ and $f$ have a common endpoint, w.l.o.g.\ $j=i+1$, then
by Definition~\ref{def:weights} we get
$$d(e,f) = \det \begin{pmatrix} m_{i,j} & m_{i,j+1} \\ m_{i+1,j} & m_{i+1,j+1}
\end{pmatrix}
= \det \begin{pmatrix} 1 & 1 \\ 0 & 1 \end{pmatrix} = 1.
$$
Otherwise we have, again by Definition~\ref{def:weights}, that
$$d(e,f) = \det \begin{pmatrix} m_{i,j} & m_{i,j+1} \\ m_{i+1,j}
& m_{i+1,j+1} \end{pmatrix}
= \det \begin{pmatrix} 1 & 1 \\ 1 & 1 \end{pmatrix} = 0.
$$

{\em Case~2:} $e$ and $f$ do not belong to a common $d$-gon of
$\mathcal{T}$.

Recall that every $d$-angulation can be built by successively glueing
$d$-gons onto the boundary. By Definition~\ref{def:weights},
the computation of the numbers involved in the determinant $d(e,f)$
only uses those $d$-gons in the $d$-angulation $\mathcal{T}$ which
form the minimal convex subpolygon of $\mathcal{P}_n$
containing $e$ and $f$. Hence we may assume that $f=(j,j+1)$
belongs to a boundary $d$-gon $\alpha$
of $\mathcal{P}_n=\mathcal{P}'\cup \alpha$ which is cut off by the diagonal
$t$ with endpoints $u$ and $v$, as in the following figure
\[
  \begin{tikzpicture}[auto]
    \node[name=s, shape=regular polygon, regular polygon sides=16, minimum size=3cm, draw] {};
 \draw[thick] (s.corner 1) to (s.corner 12);
  \draw[thick] (s.corner 4) to (s.corner 5);
  \draw[thick] (s.corner 13) to (s.corner 14);
  \draw[shift=(s.corner 14)]  node[left=5pt]  {$\alpha$};
  \draw[shift=(s.corner 4)]  node[right=40pt]  {$t$};
  \draw[shift=(s.corner 6)]  node[right=25pt]  {$\mathcal{P}'$};
  \draw[shift=(s.corner 1)]  node[above]  {{\small $u$}};
  \draw[shift=(s.corner 12)]  node[below]  {{\small $v$}};
  \draw[shift=(s.corner 13)]  node[right]  {{\small $j$}};
  \draw[shift=(s.corner 14)]  node[right]  {{\small $j+1$}};
  \draw[shift=(s.corner 4)]  node[above]  {{\small $i$}};
  \draw[shift=(s.corner 5)]  node[left]  {{\small $i+1$}};
  \end{tikzpicture}
\]
If $f$ is not attached to $t$ then a sequence as in Theorem~\ref{thm:hinge} can not
exist for $e$ and $f$ in $\mathcal{P}_n$ since any such sequence
would have to involve $t$ as penultimate entry,
but then condition (iii) is not
satisfied. 
On the other hand, in this case 
$m_{i,j}=m_{i,j+1}$ and
$m_{i+1,j}=m_{i+1,j+1}$ and hence
$$d(e,f)= \det \begin{pmatrix} m_{i,j} & m_{i,j+1} \\ m_{i+1,j} & m_{i+1,j+1}
\end{pmatrix}
= \det \begin{pmatrix} m_{i,j} & m_{i,j} \\ m_{i+1,j} & m_{i+1,j}
\end{pmatrix} = 0.
$$

If $f$ is attached to $t$, say w.l.o.g.\ $j=v$, then we have
$$d(e,f) =
\det \begin{pmatrix} m_{i,v} & m_{i,v+1} \\ m_{i+1,v} & m_{i+1,v+1}
\end{pmatrix} =
\det \begin{pmatrix} m_{i,v} & m_{i,u}+m_{i,v} \\ m_{i+1,v} &
m_{i+1,u}+m_{i+1,v}
\end{pmatrix} =
\det \begin{pmatrix} m_{i,v} & m_{i,u} \\ m_{i+1,v} &
m_{i+1,u} \end{pmatrix} \:.
$$
This last determinant is just $d_{\mathcal{P}'}(e,t)$
where the index indicates that one considers the smaller
polygon $\mathcal{P}'$ in which $t$ is a boundary edge. 
Inductively, the value of $d_{\mathcal{P}'}(e,t)$ can only
be $0$ or $1$, hence at this point we have proven part (b) by induction
(with Case~1 as start of the induction).

Now a sequence from $e$ to $f$ as in Theorem~\ref{thm:hinge}
exists if and only if there exists such a
sequence for $e$ and~$t$ in the smaller polygon $\mathcal{P}'$; 
in fact this latter sequence then has to be extended by $f$ 
to give a sequence for $e$ and $f$ in $\mathcal{P}_n$. 
Inductively, this happens if and only if
$d_{\mathcal{P}'}(e,t)=1$. But as 
$d(e,f)=d_{\mathcal{P}'}(e,t)$, this implies that $d(e,f)=1$
if and only if a sequence as in Theorem~\ref{thm:hinge} exists
for $e$ and $f$ in $\mathcal{P}_n$.

This completes the proof of Theorem~\ref{thm:hinge}.
\end{proof}



\end{document}